\newtheorem{theorem}{Theorem}[section]
\newtheorem{conjecture}[theorem]{Conjecture}
\newtheorem{proposition}[theorem]{Proposition}
\newtheorem{remark}[theorem]{Remark}
\newcommand{\PP}{{\mathbb P}} 
\newcommand{\QQ}{\mathbb{Q}} \newcommand{\CC}{{\mathbb C}}
\newcommand{\ZZ}{{\mathbb Z}}
\newcommand{\cO}{\mathcal{O}}
\newcommand{\vol}{\operatorname{vol}}
\newcommand{\lct}{\mathrm{lct}}
\title{Quantized volume comparison for Fano manifolds}
\begin{document}

\author[K. Zhang]{Kewei Zhang}
\address{School of Mathematical Sciences, Beijing Normal University, Beijing, 100875, People's Republic of China.}
\email{kwzhang@bnu.edu.cn}

\begin{abstract}
A result of Kento Fujita says that the volume of a K\"ahler--Einstein Fano manifold is bounded from above by the volume of the projective space. In this short note we establish quantized versions of Fujita's result.
\end{abstract}

\maketitle

\setcounter{tocdepth}{1}


\section{Main results}

Let $X$ be a Fano manifold of dimension $n$.

It is shown by K. Fujita \cite{Fuj18-volume} that if $X$ admits K\"ahler--Einstein metric, then it satisfies the volume inequality:
$$
\vol(-K_X)\leq \vol(-K_{\PP^n})=(n+1)^n,
$$
and the equality holds if and only if $X\cong \PP^n$. Here we recall that the volume $\vol(-K_X)$ is defined as
$$
\vol(-K_X):=\lim_{m\to\infty}\frac{\dim H^0(X,-mK_X)}{m^n/n!},
$$
which is also equal to the intersection number $(-K_X)^n$. In differential geometric sense, it is also equal to the volume $\int_X\omega^n$ for any K\"ahler form $\omega\in c_1(-K_X)$.

Fujita actually proved something stronger in \cite{Fuj18-volume}. The same result holds if $X$ is only K-semistable, an algebro-geometric notion going back to \cite{Tian97,Don02} that is slightly weaker than the existence of a K\"ahler--Einstein metric. See \cite{Liu18,Zha20-volume} for further developments in this direction.

The first result of this note is a quantized version of Fujita's volume comparison for K-semistable Fano manifolds.

\begin{theorem}
\label{thm:Kss-vol-comp}
Assume that $X$ is a K-semistable Fano manifold, then there exists $m_0>0$ depending only on $n$ such that
$$
\dim H^0(X,-mK_X)\leq \dim H^0(\PP^n,-mK_{\PP^n})\text{ for all }m\geq m_0.
$$
If the equality holds for some $m\geq m_0$, then $X\cong \PP^n$.
\end{theorem}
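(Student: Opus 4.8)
The plan is to study the filtration of $H^0(X,-mK_X)$ by order of vanishing at a general point and reduce the comparison to a one-variable combinatorial optimization. Fix a general point $p\in X$, let $\pi\colon\tilde X\to X$ be the blow-up of $p$ with exceptional divisor $E$ (so that $E$ has log discrepancy $A_X(E)=n$ over $X$), and for $m\gg 0$ set $N_m=\dim H^0(X,-mK_X)$. For $j\ge 0$ let $a_j=a_j^{(m)}$ denote $\dim\big(\mathfrak m_p^{\,j}H^0(X,-mK_X)\big/\mathfrak m_p^{\,j+1}H^0(X,-mK_X)\big)$, so that $N_m=\sum_{j\ge 0}a_j$. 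Passing to leading terms gives an injection of $\mathfrak m_p^{\,j}H^0\big/\mathfrak m_p^{\,j+1}H^0$ into $\mathfrak m_p^{\,j}\mathcal O_{X,p}\big/\mathfrak m_p^{\,j+1}\mathcal O_{X,p}$, hence the purely local bound $a_j\le\binom{j+n-1}{n-1}$ for all $j$. The arithmetic ingredient I would aim to establish is the \emph{sharp} finite-level estimate coming from K-semistability, namely $\sum_{j\ge 0}j\,a_j^{(m)}\le n\,m\,N_m$ (a quantized form of $S(E)\le A_X(E)=n$). Note that for $X=\mathbb P^n$ one has $a_j^{(m)}=\binom{j+n-1}{n-1}$ exactly for $0\le j\le(n+1)m$ and $a_j^{(m)}=0$ afterwards, and then $\sum_j j a_j^{(m)}=n\,m\,N_m$; thus the estimate is sharp and exactly calibrated to $\mathbb P^n$.

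Granting it, the inequality becomes the linear program: maximize $\sum_{j\ge 0}a_j$ over reals $a_j\in\big[0,\binom{j+n-1}{n-1}\big]$ subject to $\sum_j j a_j\le n\,m\sum_j a_j$. Using the identities $\sum_{j=0}^{J}\binom{j+n-1}{n-1}=\binom{J+n}{n}$ and $\sum_{j=0}^{J}j\binom{j+n-1}{n-1}=n\binom{J+n}{n+1}$, the $a_j$-weighted average of $j$ over a full ``threshold'' profile ($a_j=\binom{j+n-1}{n-1}$ for $j\le J$, $a_j=0$ for $j>J$) equals $\tfrac{nJ}{n+1}$, which equals $nm$ precisely when $J=(n+1)m$. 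A standard exchange argument (equivalently LP duality, with multiplier $\tfrac1m$ on the constraint) shows the maximum of the program is attained only at this threshold profile with $J=(n+1)m$ and equals $\sum_{j=0}^{(n+1)m}\binom{j+n-1}{n-1}=\binom{(n+1)m+n}{n}=\dim H^0(\mathbb P^n,-mK_{\mathbb P^n})$. This gives $N_m\le\dim H^0(\mathbb P^n,-mK_{\mathbb P^n})$ whenever $m\ge m_0(n)$, where $m_0(n)$ is whatever threshold is required for the stability estimate, with equality forcing $a_j^{(m)}=\binom{j+n-1}{n-1}$ for $0\le j\le(n+1)m$ and $a_j^{(m)}=0$ for $j>(n+1)m$.

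For the rigidity, assume equality for some $m\ge m_0$. The extremal profile means the leading-term maps are isomorphisms in every degree $j\le(n+1)m$, and both sides vanish beyond, so that $H^0(X,-mK_X)\xrightarrow{\ \sim\ }\mathcal O_{X,p}\big/\mathfrak m_p^{\,(n+1)m+1}$. Composing the multiplication map $\mathrm{Sym}^k H^0(X,-mK_X)\to H^0(X,-kmK_X)$ with evaluation of jets at $p$ factors through the multiplication surjection $\mathrm{Sym}^k\big(\mathcal O_{X,p}/\mathfrak m_p^{\,(n+1)m+1}\big)\twoheadrightarrow\mathcal O_{X,p}/\mathfrak m_p^{\,k(n+1)m+1}$; hence $H^0(X,-kmK_X)\twoheadrightarrow\mathcal O_{X,p}/\mathfrak m_p^{\,k(n+1)m+1}$ and $\dim H^0(X,-kmK_X)\ge\binom{k(n+1)m+n}{n}=\dim H^0(\mathbb P^n,-kmK_{\mathbb P^n})$ for every $k\ge1$. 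Together with the inequality already proved at level $km$ this forces equality at all levels $km$, and letting $k\to\infty$ gives $\vol(-K_X)=(n+1)^n$; Fujita's equality case then yields $X\cong\mathbb P^n$.

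The decisive difficulty is precisely the sharp finite-level estimate $\sum_j j\,a_j^{(m)}\le nmN_m$: K-semistability gives $A_X(E)\ge S(E)$ only in the limit, and the known comparisons between the $m$-th truncated invariant and $S(E)$ carry an $O(1/m)$ error; feeding such an error into the linear program above inflates the bound by a factor $(1+O(1/m))^n$, which overshoots $\dim H^0(\mathbb P^n,-mK_{\mathbb P^n})$ by $\sim m^{n-1}$ and is hopelessly lossy. So one needs an argument tailored to divisorial valuations centred at a point that is exact (or error-free) once $m\ge m_0(n)$ — this is what the hypothesis on $m_0$ reflects, and where I expect the real work to lie. (A soft alternative sidesteps this: by Hirzebruch--Riemann--Roch and Kodaira vanishing $\dim H^0(X,-mK_X)=\chi(X,-mK_X)$ is a polynomial in $m$ whose two top coefficients depend only on $\vol(-K_X)$ and whose lower coefficients are bounded in terms of $n$, since smooth Fano $n$-folds form a bounded family; as $\vol(-K_X)\le(n+1)^n$ by Fujita and the set of these volumes is finite, there is a uniform gap, forcing $\dim H^0(X,-mK_X)<\dim H^0(\mathbb P^n,-mK_{\mathbb P^n})$ for $m\ge m_0(n)$ unless $\vol(-K_X)=(n+1)^n$, i.e.\ unless $X\cong\mathbb P^n$ — but this is ineffective and does not really quantize the comparison.)
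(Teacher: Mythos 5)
Your ``soft alternative,'' which you relegate to a closing parenthesis, is in fact the paper's entire proof: apply Hirzebruch--Riemann--Roch (plus Kodaira vanishing) to write $h^0(X,-mK_X)=\sum_{i=0}^n a_i(X)m^i$, bound all coefficients uniformly in terms of $n$ via the boundedness of smooth Fano $n$-folds \cite{KMM92}, invoke Fujita's bound $a_n(X)\leq a_n(\PP^n)$ with equality only for $\PP^n$, and use the fact that $\vol(-K_X)$ is an integer taking only finitely many values to produce a uniform gap in the leading coefficient whenever $X\not\cong\PP^n$. Since the theorem asserts only the \emph{existence} of some $m_0$ depending on $n$, your worry that this argument is ``ineffective and does not really quantize the comparison'' is beside the point of correctness: this fallback is a complete proof of the statement, and it coincides with the paper's.

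Your primary route, by contrast, has the gap you yourself diagnose, and it is worth being precise about why it cannot be repaired from K-semistability alone at finite level. The estimate $\sum_j j\,a_j^{(m)}\leq n\,m\,N_m$ says exactly that every $m$-basis divisor of $-K_X$ has multiplicity at most $n$ at $p$; this is what follows from the hypothesis $\delta_{m,p}(-K_X)\geq 1$ (multiplicity $>n$ at a smooth point forces $\lct_p<1$), and indeed your linear-programming computation with $\sum_{j=0}^{J}\binom{j+n-1}{n-1}=\binom{J+n}{n}$ is essentially the paper's proof of its Proposition \ref{prop:delta-m-vol-comp}, run in the contrapositive. But K-semistability only gives $\delta(-K_X)=\lim_m\delta_m(-K_X)\geq 1$, and the known finite-level comparisons degrade by $O(1/m)$, which, as you note, is fatal after exponentiating through the LP. In other words, your main route proves the paper's Theorem \ref{thm:delta-m-vol-comp} (hypothesis $\delta_m\geq 1$), not Theorem \ref{thm:Kss-vol-comp} (hypothesis K-semistability); whether the sharp finite-level estimate holds for K-semistable $X$ and all $m\geq 1$ is exactly the content of the paper's Conjecture \ref{conj:quantized-vol}, which is open for $n\geq 4$. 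Your rigidity argument via jet surjectivity and the multiplication map $\mathrm{Sym}^k H^0(X,-mK_X)\to H^0(X,-kmK_X)$ is correct and is a nice alternative to the paper's use of Seshadri constants, but it too is conditional on the unproven estimate. So: keep the soft argument as the proof, and recognize the filtration route as belonging to the $\delta_m$ theorem.
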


This is likely well-known to experts. See \S \ref{sec:Kss} for a short proof that builds on \cite{KMM92} and \cite{Fuj18-volume}.

The above result motivates the following:
\begin{conjecture}
\label{conj:quantized-vol}
Let $X$ be a K-semistable Fano manifold. Then
$$
\dim H^0(X,-mK_X)\leq \dim H^0(\PP^n,-mK_{\PP^n})\text{ for any }m\geq 1.
$$
If the equality holds for some $m$, then $X\cong\PP^n$.
\end{conjecture}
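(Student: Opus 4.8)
\section{A proposed strategy for Conjecture \ref{conj:quantized-vol}}

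The plan is to recast the inequality as a lattice-point count for Newton--Okounkov bodies and then to feed in K-semistability through a quantized form of Fujita's argument. Fix a general point $p\in X$; blowing up $p$ and choosing a general complete flag inside the exceptional divisor $E$ produces a rank-$n$ ``infinitesimal'' valuation $\nu$ on the function field of $X$, with value semigroup $\Gamma_m=\nu(H^0(X,-mK_X)\setminus\{0\})$ and Newton--Okounkov body $\Delta:=\Delta_\nu(-K_X)\subset\RR^n_{\geq 0}$ of Euclidean volume $\vol(-K_X)/n!$. Since the graded pieces of $\nu$ are one-dimensional, $\dim H^0(X,-mK_X)=\#\Gamma_m$, and $\tfrac1m\Gamma_m\subset\Delta$ gives $\Gamma_m\subseteq m\Delta\cap\ZZ^n$. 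The same construction on $\PP^n$ produces a fixed lattice simplex $\Sigma\subset\RR^n_{\geq 0}$ --- the order simplex $\{0\leq x_n\leq\cdots\leq x_1\leq n+1\}$, also of volume $(n+1)^n/n!$ --- whose dilates count sections exactly: $\dim H^0(\PP^n,-mK_{\PP^n})=\#(m\Sigma\cap\ZZ^n)$. Thus the conjectural inequality would follow from $\#(m\Delta\cap\ZZ^n)\leq\#(m\Sigma\cap\ZZ^n)$ for all $m\geq 1$, and the equality case from a strict form of the same bound.

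K-semistability should intervene exactly as in Fujita's proof of the volume bound, now constraining the \emph{shape} of $\Delta$. The inequality $\beta(\mathrm{ord}_E)\geq 0$, together with $A_X(\mathrm{ord}_E)=n$ and $\vol(\pi^*(-K_X)-tE)=n!\cdot\vol(\{x\in\Delta:x_1\geq t\})$, gives upon integrating in $t$ the barycentric bound $\int_\Delta x_1\,dx\leq n\,\vol(\Delta)$; more generally, feeding $\delta(X)\geq 1$ (i.e.\ $S(v)\leq A_X(v)$) into the monomial valuations at $p$ compatible with the chosen flag yields a family of linear inequalities satisfied by $\Delta$ --- and for $\PP^n$ every one of them is an equality. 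Combined with Fujita's bound $\vol(\Delta)\leq(n+1)^n/n!=\vol(\Sigma)$, these are the inputs at hand, and the task is to deduce the lattice-point inequality from them.

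This deduction is, I expect, the main obstacle. One should not hope for the naive containment $\Delta\subseteq\Sigma$: already for $X=\PP^1\times\PP^1$ the pseudoeffective threshold $\sup\{t:\pi^*(-K_X)-tE\text{ big}\}$ of the blow-up of a general point equals $4>n+1=3$, so $\Delta$ is strictly wider than $\Sigma$ in the $x_1$-direction, and the count must instead be controlled by playing this excess width off against the volume deficit $\vol(\Delta)<\vol(\Sigma)$ (strict unless $X\cong\PP^n$, by Fujita). Proving a lattice-point comparison of this ``reverse isoperimetric'' type for convex bodies constrained only by a volume bound and finitely many barycentric inequalities appears to require a genuinely new convex-geometric input; an alternative --- less promising, in my view --- would be to bound the lower Chern numbers $(-K_X)^{n-k}\!\cdot c_k(X)$ of a K-semistable Fano in terms of $(-K_X)^n$ and compare the Hilbert polynomials $\chi(X,-mK_X)$ and $\chi(\PP^n,-mK_{\PP^n})$ coefficient by coefficient, but the needed reverse Chern-class inequalities are unknown, and for $k\geq 2$ the Kähler--Einstein (Bogomolov--Miyaoka--Yau type) inequality points the other way. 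Finally, rigidity should come out of tracking strictness: equality for some $m$ collapses $\Gamma_m\subseteq m\Delta\cap\ZZ^n\subseteq m\Sigma\cap\ZZ^n$ to equalities, which with the volume bound and Fujita's equality case should force $\vol(-K_X)=(n+1)^n$, hence $X\cong\PP^n$; for $m\geq m_0$ this is already Theorem \ref{thm:Kss-vol-comp}, so only small $m$ would need the extra argument.
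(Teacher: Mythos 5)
The statement you are addressing is a \emph{conjecture}: the paper offers no proof of it in general, only the asymptotic case $m\geq m_0$ (Theorem \ref{thm:Kss-vol-comp}, via boundedness of Fano manifolds plus Fujita's volume bound) and the low-dimensional case $n\leq 3$ (via Riemann--Roch, where every coefficient of the Hilbert polynomial of $-K_X$ is an explicit positive multiple of $\vol(-K_X)$ plus a constant). Your proposal does not close this gap either, and to your credit you say so explicitly. The missing step is exactly where you locate it: even granting the correct translation $\dim H^0(X,-mK_X)=\#\Gamma_m\leq\#(m\Delta\cap\ZZ^n)$ and $\dim H^0(\PP^n,-mK_{\PP^n})=\#(m\Sigma\cap\ZZ^n)$, there is no known mechanism for deducing a lattice-point inequality $\#(m\Delta\cap\ZZ^n)\leq\#(m\Sigma\cap\ZZ^n)$ \emph{for every fixed $m$} from the volume bound $\vol(\Delta)\leq\vol(\Sigma)$ together with finitely many barycentric/linear constraints on $\Delta$. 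Your own $\PP^1\times\PP^1$ example shows $\Delta\not\subseteq\Sigma$ in general, which rules out the only soft route to such a comparison; and lattice-point counts of convex bodies of equal volume can differ in either direction at any fixed dilation, so some genuinely new input is required. A further unaddressed point: even if the inequality were established, the equality case for small $m$ would not follow from collapsing your chain of inclusions, since $\#(m\Delta\cap\ZZ^n)=\#(m\Sigma\cap\ZZ^n)$ does not force $\vol(\Delta)=\vol(\Sigma)$ at a single $m$.

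Two smaller remarks on the strategy itself. First, your ``alternative'' of comparing Hilbert polynomials coefficient by coefficient is precisely what the paper does for $n\leq 3$, and the paper's discussion of the $4$-fold case (the term $c_1(X)^2\cdot c_2(X)$ is not controlled, except for Picard number one via \cite{H03}) confirms your pessimism about that route. Second, be careful with the inclusion $\tfrac1m\Gamma_m\subset\Delta$: it gives the inequality in the direction you need for $X$, but for $\PP^n$ you need the \emph{exact} count $\dim H^0(\PP^n,-mK_{\PP^n})=\#(m\Sigma\cap\ZZ^n)$, which does hold for the flag of linear subspaces but should be stated as requiring saturation of the value semigroup rather than following formally from the Okounkov-body construction. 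As a research program your outline is reasonable and correctly identifies both the available inputs (Fujita's $\beta$-inequalities shaping $\Delta$) and the obstruction; as a proof it is not one, and it does not reproduce the partial results the paper actually establishes.
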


At the moment the author is not aware of any example that could violate this conjecture.
On the other hand, it is easy to show that
this conjecture holds when $n\leq 3$ by Riemann--Roch (see \S \ref{sec:Kss}). However for higher dimensions it seems that substantially new ideas are needed to attack this conjecture. Even in the toric case this seems to be a tough problem.

Next, we present another quantized version of Fujita's volume comparison theorem. In fact, what Fujita actually proved in \cite{Fuj18-volume} is that 
$$\vol(-K_X)\leq \vol(-K_{\PP^n}) \text{ if } \delta(-K_X)\geq 1,$$
where $\delta(-K_X)$ denotes the delta invariant of $-K_X$ introduced in \cite{FO18,BJ17}. By the Fujita--Li criterion \cite{Li15,Fuj19}, $X$ is K-semistable if and only if $\delta(-K_X)\geq 1$.
This invariant also has its quantized version, $\delta_m(-K_X)$, that was also introduced in \cite{FO18}. We will recall the definitions of these invariants in \S \ref{sec:delta}.

The second result of this note is a quantized volume comparison theorem for Fano manifolds in terms of the $\delta_m$-invariant.

\begin{theorem}
\label{thm:delta-m-vol-comp}
     Let $X$ be a Fano manifold. Assume that for some $m\geq 1$ we have $\delta_{m}(-K_X)\geq 1$. Then
    $$
   \dim H^0(X,-mK_X)\leq \dim H^0(\PP^n,-mK_{\PP^n}),
    $$
    and the equality holds if and only if $X\cong \PP^n$.
\end{theorem}

Actually in \S \ref{sec:delta} we will prove a stronger result (Theorem \ref{thm:quant-vol-comp}) that only involves the local $\delta_m$-invariant at a point (cf. \cite[\S 2.2]{AZ20}). Our proof, although follows the same strategy as in \cite{Fuj16}, are presented in a more elementary way that only uses basic linear algebra. Our approach also reveals some new information about linear systems of a line bundle that could be of independent interest.

\textbf{Acknowledgment.} The author is grateful to Chen Jiang for helpful discussions. Thanks also go to the anonymous referee for the suggestions that sharpen this work. The author is supported by NSFC grants 12271038 and 12271040.






\section{Quantized volume comparison for K-semistable Fanos}
\label{sec:Kss}
First, we give a quick proof for Theorem \ref{thm:Kss-vol-comp}.

\begin{theorem}
Assume that $X$ is a K-semistable Fano manifold, then there exists $m_0>0$ depending only on $n$ such that
$$
\dim H^0(X,-mK_X)\leq \dim H^0(\PP^n,-mK_{\PP^n})\text{ for all }m\geq m_0.
$$
If the equality holds for some $m\geq m_0$, then $X\cong \PP^n$.
\end{theorem}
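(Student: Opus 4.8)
The plan is to combine Fujita's volume inequality for K-semistable Fanos with an effective comparison between $\dim H^0(X,-mK_X)$ and the leading-order term $\vol(-K_X)m^n/n!$. Since $X$ is K-semistable, Fujita's theorem gives $\vol(-K_X)\le (n+1)^n=\vol(-K_{\PP^n})$. Thus it suffices to show that for $m$ large (depending only on $n$), the higher cohomology of $-mK_X$ vanishes and the Hilbert polynomial is controlled well enough that $h^0(X,-mK_X)=\chi(X,-mK_X)$ is a polynomial in $m$ whose dominant behavior is governed by the volume. The key point is uniformity: the threshold $m_0$ must depend only on $n$, not on $X$. This is exactly where I would invoke \cite{KMM92}: boundedness results for Fano manifolds (Kawamata--Matsusaka-type effective bounds, or the effective base-point-freeness of $-mK_X$) guarantee that the family of $n$-dimensional Fano manifolds is bounded, so there is a uniform $m_0=m_0(n)$ past which $H^i(X,-mK_X)=0$ for $i>0$ and the Hilbert polynomial coefficients lie in a bounded range.

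Concretely, I would proceed as follows. First, fix $m_0(n)$ so large that for every $n$-dimensional Fano manifold $X$ and every $m\ge m_0$ one has $h^i(X,-mK_X)=0$ for $i>0$ (Kawamata--Viehweg vanishing already gives this for each fixed $X$, and boundedness makes it uniform), hence $h^0(X,-mK_X)=\chi(X,-mK_X)=\chi(\PP^n,\cO(m))+\big(\chi(X,-mK_X)-\chi(\PP^n,-mK_{\PP^n})\big)$. Second, write $P_X(m):=\chi(X,-mK_X)=\frac{\vol(-K_X)}{n!}m^n+\frac{(-K_X)^{n-1}\cdot(-K_X)}{2(n-1)!}m^{n-1}+\cdots$; by adjunction the subleading coefficient is also $\frac{\vol(-K_X)}{2(n-1)!}m^{n-1}$, and comparing with $P_{\PP^n}(m)=\binom{m+n}{n}$ whose leading two coefficients are $\frac{(n+1)^n}{n!}m^n$ and $\frac{(n+1)^n}{2(n-1)!}m^{n-1}$, we see $P_{\PP^n}(m)-P_X(m)=\frac{(n+1)^n-\vol(-K_X)}{n!}\big(m^n+\tfrac{n}{2}m^{n-1}\big)+(\text{lower order, bounded coefficients})$. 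Third, since $(n+1)^n-\vol(-K_X)\ge 0$ by Fujita, the leading term is nonnegative; and because the lower-order coefficients of $P_X$ are uniformly bounded (boundedness of the family), there is $m_0(n)$ beyond which $P_{\PP^n}(m)-P_X(m)\ge 0$, which gives the inequality.

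For the equality case: if $h^0(X,-mK_X)=h^0(\PP^n,-mK_{\PP^n})$ for some $m\ge m_0$, then from the displayed expansion the leading term forces $\vol(-K_X)=(n+1)^n$ up to an error absorbed by the bounded lower-order terms --- more carefully, one should argue that for $m_0$ chosen large enough the strict positivity of $(n+1)^n-\vol(-K_X)$ (when $X\not\cong\PP^n$) produces a gap $P_{\PP^n}(m)-P_X(m)\ge c(n)>0$, using that $\vol(-K_X)$ takes finitely many values on the bounded family or, alternatively, applying the gap theorem of \cite{Liu18,Zha20-volume} that bounds $\vol(-K_X)$ away from $(n+1)^n$ for K-semistable $X\not\cong\PP^n$. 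Then $h^0$-equality at a single $m\ge m_0$ forces $\vol(-K_X)=(n+1)^n$, and Fujita's equality characterization \cite{Fuj18-volume} yields $X\cong\PP^n$.

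The main obstacle is the uniformity of $m_0$ in $n$ alone: establishing that the class of $n$-dimensional Fano manifolds is bounded with effectively controlled lower-order Hilbert polynomial coefficients and a uniform vanishing threshold. This is precisely what \cite{KMM92} supplies (effective non-vanishing / boundedness for Fano manifolds), so modulo citing that input the argument is routine; the bookkeeping with Hilbert polynomial coefficients and adjunction is elementary. A secondary subtlety is the equality case, where one wants a genuine gap rather than a limiting statement --- handled either by finiteness of volumes on the bounded family or by the known volume gap for K-semistable Fanos.
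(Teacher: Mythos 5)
Your proposal follows essentially the same route as the paper: use Hirzebruch--Riemann--Roch to expand $h^0(X,-mK_X)=\chi(X,-mK_X)$ as a polynomial in $m$, invoke Fujita's volume inequality to control the leading coefficient, invoke the boundedness of $n$-dimensional Fano manifolds from \cite{KMM92} to uniformly bound the lower-order coefficients, and then push through the polynomial comparison. Two small remarks on your write-up. First, the step where you ``fix $m_0(n)$ so large that \dots\ $h^i(X,-mK_X)=0$ for $i>0$'' is unnecessary: for a Fano manifold, $-(m+1)K_X$ is ample for every $m\ge 0$, so Kawamata--Viehweg vanishing gives $H^i(X,-mK_X)=0$ for all $m\ge 0$ outright, with no threshold needed; the paper uses this and writes the HRR identity for all $m\ge 0$. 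Second, you postpone the ``uniform gap'' discussion to the equality case, but it is already needed for the inequality to have a uniform $m_0$: when $X\not\cong\PP^n$ one must know $(n+1)^n-\vol(-K_X)$ is bounded below by a positive constant depending only on $n$. Here the cleanest observation, which the paper makes, is that $\vol(-K_X)=(-K_X)^n$ is a positive integer, and by boundedness only finitely many values occur, so the gap is automatic (indeed $\ge 1$). Your alternative of citing the volume-gap results of \cite{Liu18,Zha20-volume} would also work but is heavier machinery than needed. With these two points tidied up, your argument coincides with the paper's.
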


\begin{proof}
    By the Hirzebruch--Riemann-Roch formula,
    $$
    h^0(X,-mK_X):=\dim H^0(X,-mK_X)=\sum_{i=0}^n a_i(X)m^i,\ \text{for all }m\geq 0.
    $$
   Here $a_i(X)$ are topological constants depending only on the Chern classes of the holomorphic tangent bundle $T_X$. Some coefficients have simple expressions:
    $$
    a_n(X)=\frac{\vol(-K_X)}{n!},\ a_{n-1}(X)=\frac{\vol(-K_X)}{2(n-1)!},\ a_0(X)=\chi (\cO_X)=1.
    $$
    The other coefficients are more complicated to express. However, thanks to the boundedness of Fano manifolds \cite{KMM92}, they are all bounded:
    $$
    |a_i(X)|\leq A,
    $$
    where $A>0$ depends only on $n$. So $\vol(-K_X)$, as an integer, only takes finitely many different values.

    According to K. Fujita's estimate \cite{Fuj18-volume}, the leading coefficient satisfies
    $$
    a_n(X)\leq a_n(\PP^n),
    $$
    with the equality taking place only when $X\cong \PP^n$. 
    So it suffices to deal with the case where
    $$
    a_n(X)< a_n(\PP^n).
    $$
    Then we can find a sufficiently large integer $m_0>0$, depending only on $n$, such that
    $$
    h^0(X,-mK_X)< h^0(\PP^n,-mK_{\PP^n})
    $$
    for all $m\geq m_0$. This completes the proof.
\end{proof}

\begin{remark}
    A similar result holds for K-semistable $\QQ$-Fano varieties as well. But in this note we shall simply content ourselves with the smooth setting.
\end{remark}

Next, we show that Conjecture \ref{conj:quantized-vol} holds for $n\leq 3$, even without the K-semistability assumption (we are grateful to the referee for this point).

\begin{proposition}
    Let $X$ be a Fano manifold with dimension $n\leq 3$, then
    $$
h^0(X,-mK_X)\leq h^0(\PP^n,-mK_{\PP^n})\text{ for any }m\geq 1.
$$
If the equality holds for some $m$, then $X\cong\PP^n$.
\end{proposition}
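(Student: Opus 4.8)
The plan is to reduce the statement to a Hirzebruch--Riemann--Roch computation combined with Fujita's volume bound \cite{Fuj18-volume}. Since $-K_X$ is ample, $-(m+1)K_X$ is ample for every $m\geq 0$, so Kodaira vanishing gives $H^i(X,-mK_X)=H^i\big(X,K_X\otimes(-(m+1)K_X)\big)=0$ for all $i>0$; hence $h^0(X,-mK_X)=\chi(X,-mK_X)$ for every $m\geq 0$, and likewise on $\PP^n$. Thus it suffices to compare the Hilbert polynomials $m\mapsto\chi(X,-mK_X)$ and $m\mapsto\chi(\PP^n,-mK_{\PP^n})$.

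The key point is that for $n\leq 3$ this Hilbert polynomial depends on $X$ only through the anticanonical degree $(-K_X)^n$. By Hirzebruch--Riemann--Roch its coefficients are universal polynomial expressions in the intersection numbers of $-K_X$ with the Chern classes of $T_X$, and for $n\leq 3$ the only such numbers that occur, besides powers of $-K_X$, are $\chi(\cO_X)$ and, when $n=3$, the number $(-K_X)\cdot c_2(T_X)$; the latter is the only appearance of the second Chern class in the threefold Riemann--Roch formula. But $\chi(\cO_X)=1$ for any Fano manifold, again by Kodaira vanishing, and for a Fano threefold the Todd-genus identity $\chi(\cO_X)=\tfrac{1}{24}(-K_X)\cdot c_2(T_X)$ then forces $(-K_X)\cdot c_2(T_X)=24$. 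Hence there are polynomials $c_n,r_n\in\QQ[m]$ depending only on $n$ with
$$
\chi(X,-mK_X)=c_n(m)\,(-K_X)^n+r_n(m),
$$
and a direct computation gives $c_1(m)=m$, $c_2(m)=\tfrac{m(m+1)}{2}$ and $c_3(m)=\tfrac{m(m+1)(2m+1)}{12}$, all strictly positive for $m\geq 1$.

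Applying the same identity to $\PP^n$, where $(-K_{\PP^n})^n=(n+1)^n$, I would then obtain for every $m\geq 1$
$$
h^0(\PP^n,-mK_{\PP^n})-h^0(X,-mK_X)=c_n(m)\big((n+1)^n-(-K_X)^n\big).
$$
By Fujita's theorem \cite{Fuj18-volume}, the K-semistability of $X$ yields $(-K_X)^n=\vol(-K_X)\leq (n+1)^n$, with equality if and only if $X\cong\PP^n$. Since $c_n(m)>0$ for $m\geq 1$, the right-hand side above is $\geq 0$, which is the desired inequality; and it vanishes for some $m\geq 1$ exactly when $(-K_X)^n=(n+1)^n$, that is, when $X\cong\PP^n$.

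There is no real obstacle in dimension $\leq 3$: the entire content is the observation that in these dimensions the Hilbert function $m\mapsto h^0(X,-mK_X)$ is completely determined by $\vol(-K_X)$. This is precisely what fails for $n\geq 4$, where the intermediate Chern numbers of $T_X$ appearing in the lower-order coefficients of the Hilbert polynomial are no longer universal among Fano $n$-folds, hence not controlled by the volume alone, which is why Conjecture \ref{conj:quantized-vol} seems to require genuinely new input in higher dimensions.
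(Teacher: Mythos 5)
Your proof is correct and follows essentially the same route as the paper: Hirzebruch--Riemann--Roch (plus Kodaira vanishing and $\chi(\cO_X)=1$, with the 3-fold identity $(-K_X)\cdot c_2(T_X)=24$) to express $h^0(X,-mK_X)$ as a polynomial in $m$ whose only dependence on $X$ is through $\vol(-K_X)$, then Fujita's volume bound to conclude. The paper simply quotes $c_1c_2=24$ as a known identity rather than deriving it from the Todd genus, but otherwise the arguments coincide.
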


\begin{proof}
When $n=1$ there is nothing to argue.
For $n=2$, the Hirzebruch--Riemann-Roch formula gives
$$
 h^0(X,-mK_X)=\frac{m(m+1)}{2}\vol(-K_X)+1,\ m\geq1.
$$
    For Fano 3-folds, combining the Hirzebruch--Riemann-Roch formula with the basic identity $c_1(X)c_2(X)=24$ one has
    $$
    h^0(X,-mK_X)=\frac{m(2m+1)(m+1)}{12}\vol(-K_X)+2m+1,\ m\geq1.
    $$
    It is well known that $\vol(-K_X)\leq \vol(-K_{\PP^n})$ holds for Fano manifolds with $n\leq 3$, and the equality holds only when $X\cong \PP^n$. So we conclude.
\end{proof}

The above argument breaks down when $n\geq 4$. For instance, the Hirzebruch--Riemann-Roch formula for Fano 4-fold gives
$$
h^0(X,-mK_X)
=\frac{m^4+2m^3+m^2}{24}\vol(-K_X)+\frac{m^2+m}{24}c_1(X)^2\cdot c_2(X)+1.
$$
The term $c_1(X)^2\cdot c_2(X)$ can be bounded by the Riemann-Roch type inequalities \cite{KM}, which however is probably not effective enough for our purpose. 

But we have the following partial answer when $n=4$.

\begin{proposition}
    Let $X$ be a K-semistable Fano manifold with $\dim X=4$. Assume that
    $
    h^0(X,-K_X)\leq h^0(\PP^4,-K_{\PP^4})=126$.
    Then
    $$
h^0(X,-mK_X)\leq h^0(\PP^4,-mK_{\PP^4})\text{ for any }m\geq 1.
$$
If the equality holds for some $m\geq 2$, then $X\cong \PP^4$.
\end{proposition}

\begin{proof}
    Using
    $$
    h^0(X,-K_X)=\frac{\vol(-K_X)}{6}+\frac{c_1(X)^2\cdot c_2(X)}{12}+1\leq 126,
    $$
    we derive that
    $$
   c_1(X)^2\cdot c_2(X)\leq 1500-2\cdot \vol(-K_X).
    $$
    Therefore,
    \begin{equation*}
        \begin{split}
           h^0(X,-mK_X)&\leq \frac{m^4+2m^3+m^2}{24}\vol(-K_X)+\frac{m^2+m}{24}(1500-2\cdot \vol(-K_X))+1.\\
        \end{split}
    \end{equation*}
   Note that
    $$
    h^0(\PP^4,-mK_{\PP^4})
=\frac{m^4+2m^3+m^2}{24}\cdot 625+\frac{m^2+m}{24}\cdot 250+1,
    $$
    so
    $$
    h^0(X,-mK_X)-h^0(\PP^4,-mK_{\PP^4})\leq (\vol(-K_X)-625)\frac{m^4+2m^3-m^2-2m}{24}.
    $$
    Then Fujita's volume comparison \cite{Fuj18-volume} allows us to conclude.
\end{proof}

Therefore, to prove Conjecture \ref{conj:quantized-vol} for Fano 4-fold, it suffices to show that $h^0(X,-K_X)< 126$ when $X\not\cong \PP^4$, which is indeed the case when $X$ has Picard number 1 (see the estimates in \cite{H03}).


For higher dimensions, more complicated Chern numbers will be involved in the Hirzebruch--Riemann-Roch formula, so probably a different approach is needed to attack the conjecture.

\section{The quantized delta invariant}
\label{sec:delta}

We now recall the definition of $\delta$-invariant, following \cite{FO18,BJ17,AZ20}.

Let $X$ be a compact complex manifold of dimension $n$ and let $L$ be a holomorphic line bundle on $X$.
For $m\geq 1$, we let
$$
d_m:=h^0(X,mL):=\dim_\CC H^0(X,mL)
$$
be the dimension of the space of global holomorphic sections of $mL$.
Assume that $d_m>0$ and let $\{s_1,...,s_{d_m}\}$ be a basis of $H^0(X,mL)$ and let
$$
D=\frac{1}{md_m}\sum_{i=1}^{d_m}(s_i=0)
$$
be the $\QQ$-divisor associated with this basis, which will be called an \emph{$m$-basis divisor} of $L$. For any point $p\in X$, define
$$
\delta_{m,p}(L):=\inf\{\lct_p(X,D)|\text{ $D$ is an $m$-basis divisor}\}.
$$
Here $\lct_p(X,D)$ denotes the log canonical threshold of $D$ at $p$. 
In analytic terms,
$$
\lct_p(X,D)=\sup\left\{\lambda>0:\bigg(\prod_{i=1}^{d_m}|s_i|^2\bigg)^{\frac{-\lambda}{md_m}}\text{ is integrable near }p\right\}.
$$
If the vanishing order of $D$ at $p$ is larger than $n$, then $\lct_p(X,D)<1$ (see \cite[Lemma 8.10]{Kol97}), meaning that the pair $(X,D)$ is not log canonical at $p$.

The $\delta_m$-invariant of $L$ is then defined as
$$
\delta_m(L)=\inf_{p\in X}\delta_{m,p}(L).
$$
If $X$ is projective and $L$ is a big line bundle on $X$, then we further put
$$
\delta(L):=\lim_{m\to\infty}\delta_m(L).
$$
Note that the above limit exists by Blum--Jonsson \cite{BJ17}.

Now we move on to the following estimate.

\begin{proposition}
\label{prop:delta-m-vol-comp}
    Assume that $\delta_{m,p}(L)\geq 1$ for some $m\geq 1$ and $p\in X$. Then
    $$
    h^0(X,mL)\leq h^0(\PP^n,-mK_{\PP^n}).
    $$
\end{proposition}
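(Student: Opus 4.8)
The plan is to extract a clever $m$-basis divisor whose log canonical threshold at $p$ controls $h^0(X,mL)$ from above, and then feed the hypothesis $\delta_{m,p}(L)\ge 1$ into the estimate on that divisor. The natural way to build such a divisor is to choose a basis of $H^0(X,mL)$ that is adapted to the order of vanishing at $p$: filter $H^0(X,mL)$ by the subspaces $F^j=\{s:\ord_p(s)\ge j\}$, pick a basis $s_1,\dots,s_{d_m}$ compatible with this filtration, and record the jumping numbers $a_i:=\ord_p(s_i)$ (arranged in nondecreasing order). Then the associated $m$-basis divisor $D=\frac{1}{md_m}\sum_i(s_i=0)$ has vanishing order $\mathrm{ord}_p(D)=\frac{1}{md_m}\sum_i a_i$ at $p$.

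The key step is the dimension-counting inequality that bounds this average vanishing order from below. The point is that the number of basis elements with $\ord_p\ge j$ is at most $\dim H^0(X,mL)$ minus the dimension of the space of $m$-jets at $p$ realized by sections, and crucially it is at most the number of monomials of degree $<j$ in $n$ variables, i.e. $\binom{n+j-1}{n}$, since the $m$-jet (or $j$-jet) map $H^0(X,mL)\to \mathcal{O}_{X,p}/\mathfrak{m}_p^{\,j}$ is linear. Summing this up, one gets that $\sum_i a_i$ is bounded below by the same sum one would get by packing all $d_m$ sections into the smallest possible vanishing orders, namely the value attained when $H^0(X,mL)$ looks like the space of polynomials of bounded degree. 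Concretely: if $d_m > h^0(\PP^n,-mK_{\PP^n}) = \binom{n(m+1)}{n}$, then some basis element is forced to vanish to order $> n(m+1)$ at $p$... wait, more precisely one shows $\mathrm{ord}_p(D) \cdot m > $ something exceeding $n$, so that $\mathrm{ord}_p(mD)>n$, hence $\lct_p(X,D) \le \lct_p(X,\mathrm{ord}_p(D)\cdot\{p\text{-germ}\}) < 1$ by \cite[Lemma 8.10]{Kol97} (the cited fact that vanishing order $>n$ forces the pair to be non-log-canonical). This contradicts $\delta_{m,p}(L)\ge 1$, which says \emph{every} $m$-basis divisor, in particular this one, has $\lct_p\ge 1$.

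More carefully, the combinatorial heart is: the minimum possible value of $\sum_{i=1}^{d_m} a_i$, subject to $\#\{i:a_i\le j-1\}\le \binom{n+j-1}{n}$ for all $j$, is achieved by the ``greedy'' profile matching $\PP^n$ with $L=\cO(m)$, where $d_m=\binom{n+m}{n}$... but here $d_m$ is arbitrary. So the right statement is: writing $k$ for the largest integer with $\binom{n+k}{n}\le d_m$, the greedy profile gives $\sum a_i \ge$ (the $\PP^n$-value for the first $\binom{n+k}{n}$ sections) $+\,k\cdot(\text{leftover count})$, and one checks this forces $m\cdot\mathrm{ord}_p(D)>n$ exactly when $d_m>\binom{n(m+1)}{n}=h^0(\PP^n,-mK_{\PP^n})$. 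I would organize this as: (i) set up the filtration and jumping numbers; (ii) prove the jet-map dimension bound; (iii) solve the resulting discrete optimization to get the lower bound on $\mathrm{ord}_p(D)$; (iv) invoke \cite[Lemma 8.10]{Kol97} and conclude by contradiction with $\delta_{m,p}(L)\ge1$.

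The main obstacle I anticipate is step (iii), making the optimization bound sharp enough: one needs the threshold for $m\cdot\mathrm{ord}_p(D)>n$ to land \emph{exactly} at $h^0(\PP^n,-mK_{\PP^n})$ and not one unit off, which requires carefully tracking the boundary case where $d_m=h^0(\PP^n,-mK_{\PP^n})$ and the profile is forced to be the $\PP^n$-profile giving $\mathrm{ord}_p(D)=n/m$ precisely. Getting the equality characterization $X\cong\PP^n$ (needed for Theorem \ref{thm:delta-m-vol-comp}, though not for this Proposition) would then come from analyzing this rigidity case — if $d_m=h^0(\PP^n,-mK_{\PP^n})$ and $\delta_{m,p}(L)\ge1$, the section ring in degree $m$ must generate exactly the jet space of $\PP^n$, which should pin down $X$; but as stated the Proposition only asks for the inequality, so (i)–(iv) suffice.
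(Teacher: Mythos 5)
Your approach is essentially the paper's: adapt a basis of $H^0(X,mL)$ to the vanishing-order filtration at $p$, bound the number of basis elements of order $<j$ by the jet-space dimension $\binom{n+j-1}{n}$, deduce that the resulting $m$-basis divisor $D$ has $\mathrm{ord}_p(D)>n$ once $d_m$ exceeds the $\PP^n$ value, and contradict $\delta_{m,p}(L)\ge 1$ via \cite[Lemma 8.10]{Kol97}. The step (iii) you flag as the main obstacle closes without any delicate discrete optimization: since $\#\{i:a_i\ge j\}=h^0(X,\mathfrak m_p^{\,j}\otimes mL)\ge d_m-\binom{n+j-1}{n}$, summing over $1\le j\le m(n+1)+1$ gives
$$
md_m\,\mathrm{ord}_p(D)=\sum_i a_i\;\ge\;\bigl(m(n+1)+1\bigr)d_m-\binom{m(n+1)+n+1}{n+1},
$$
and the identity $\binom{m(n+1)+n+1}{n+1}=(m+1)\binom{m(n+1)+n}{n}$ shows the right-hand side exceeds $mnd_m$ precisely when $d_m>\binom{m(n+1)+n}{n}$, so the threshold lands exactly where you need it. One correction: $h^0(\PP^n,-mK_{\PP^n})=\binom{m(n+1)+n}{n}$ (sections of $\cO(m(n+1))$), not $\binom{n(m+1)}{n}$; with your expression the boundary case really would be off by the amount you were worried about.
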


\begin{proof}
The original idea is contained in \cite[Theorem 2.3]{Fuj18-volume}. Here we present a proof that uses only linear algebra.
    For $j\geq 1$ let
    $
    \frak m_{p}^j
    $
    be the ideal sheaf generated by holomorphic functions with vanishing order at least $j$ at $p$. 
    

First, observe that
\begin{equation}
    \label{eq:h0-jP-estimate}
    h^0(X,mL)-h^0(X,\frak m_{p}^j\otimes mL)\leq \binom{n+j-1}{n}.
\end{equation}
Indeed, $
H^0(X,mL)/H^0(X,\frak m_{p}^j\otimes mL)
$ is spanned by sections of $mL$ with vanishing order at $p$ less than $j$. By considering Taylor expansions of these sections at $p$, the dimension of this quotient space cannot be bigger than the dimension of the space of $n$-variable polynomials with degree less than $j$. So the assertion follows.

To show that $
    h^0(X,mL)\leq h^0(\PP^n,-mK_{\PP^n})
    $, we argue by contradiction. Suppose otherwise that
$$
h^0(X,mL)> h^0(\PP^n,-mK_{\PP^n})=\binom{n+m(n+1)}{n}.
$$
This together with \eqref{eq:h0-jP-estimate} then imply that
\begin{equation*}
    \begin{split}
        \frac{\sum_{j=1}^{m(n+1)+1}h^0(X,\frak m_{p}^j\otimes mL)}{m h^0(X,mL)}&\geq \frac{mn+m+1}{m}-\frac{\sum_{j=1}^{m(n+1)+1}\binom{n+j-1}{n}}{mh^0(X,mL)}\\
        &=n+1+\frac{1}{m}-\frac{\binom{m(n+1)+n+1}{n+1}}{mh^0(X,mL)}\\
        &> n+1+\frac{1}{m}-\frac{\binom{m(n+1)+n+1}{n+1}}{m\binom{m(n+1)+n}{n}}\\
        &=n.\\
    \end{split}
\end{equation*}
Using linear algebra as in \cite[Lemma 2.2]{FO18} and \cite[Lemma 2.7]{CRZ19} then yields an $m$-basis divisor $D$ of $L$ such that its vanishing order at $p$ is larger than $n$. So the pair $(X,D)$ cannot be log canonical at $p$. Thus $\delta_{m,p}(L)<1$, a contradiction.

\end{proof}

From the above proof, one can also easily characterize the equality case.

\begin{proposition}
\label{prop:delta-m-vol-eq-case}
      Assume that $\delta_{m,p}(L)\geq 1$ for some $m\geq 1$ and $p\in X$. If it happens that
$$
h^0(X,mL)= h^0(\PP^n,-mK_{\PP^n})=\binom{m(n+1)+n}{n},
$$
then the linear system $|mL|$ separates $m(n+1)$-jets at $p$. More precisely, we can find a holomorphic coordinate system $(z_1,...,z_n)$ around $p$ and a basis $\{s_\alpha\}$ of $H^0(X,mL)$ such that the local expressions of $s_\alpha$ are monomials modulo higher order terms:
$$
s_\alpha(z)=z_1^{\alpha_1}\cdots z_n^{\alpha_n}+o(|z|^{m(n+1)}),
$$
where $\alpha=(\alpha_1,...,\alpha_n)$ runs through all the indices such that
$$
\alpha_i\in\ZZ_{\geq 0},\ \sum_{i=1}^n \alpha_i\leq m(n+1).
$$
\end{proposition}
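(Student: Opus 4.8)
The plan is to re-run the proof of Proposition \ref{prop:delta-m-vol-comp} in the boundary case $h^0(X,mL)=\binom{m(n+1)+n}{n}$, where every inequality used there must be an equality, and to deduce from this that the $m(n+1)$-jet evaluation map at $p$ is an isomorphism; the refined statement about monomial bases is then immediate. Write $N=m(n+1)$, $d_m=h^0(X,mL)=\binom{N+n}{n}$, and for $j\geq 1$ set $q_j:=d_m-h^0(X,\frak m_p^j\otimes mL)$. By the Taylor-expansion argument around \eqref{eq:h0-jP-estimate}, $q_j$ is exactly the rank of the jet map $\mathrm{ev}_p^{j-1}\colon H^0(X,mL)\to J^{j-1}_p(mL)$, and $q_j\leq\binom{n+j-1}{n}$. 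Since $\dim J^{N}_p(mL)=\binom{N+n}{n}=d_m$ (this space is identified with the degree $\leq N$ polynomials in $n$ variables after fixing a local trivialization and coordinates), it suffices to prove $q_{N+1}=d_m$, i.e.\ that $\mathrm{ev}_p^{N}$ is onto.

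Choose a basis $\{s_1,\dots,s_{d_m}\}$ of $H^0(X,mL)$ adapted to the filtration $F^j:=H^0(X,\frak m_p^j\otimes mL)$, so that $\sum_i\ord_p(s_i)=\sum_{j\geq1}\dim F^j=\sum_{j\geq1}(d_m-q_j)$; this is a finite sum because each nonzero section has finite vanishing order at $p$. The associated $m$-basis divisor $D$ satisfies $\ord_p(D)=\frac{1}{md_m}\sum_{j\geq1}(d_m-q_j)$, and since $\delta_{m,p}(L)\geq1$ forces $\lct_p(X,D)\geq1$, Kollár's lemma \cite[Lemma 8.10]{Kol97} gives $\ord_p(D)\leq n$, that is, $\sum_{j\geq1}(d_m-q_j)\leq mnd_m$. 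On the other hand, the hockey-stick identity gives $\sum_{j=1}^{N+1}\binom{n+j-1}{n}=\binom{N+n+1}{n+1}$, and a direct computation (the one in the proof of Proposition \ref{prop:delta-m-vol-comp}, now with $d_m=\binom{N+n}{n}$) shows $(N+1)d_m-\binom{N+n+1}{n+1}=mnd_m$. Using $q_j\leq\binom{n+j-1}{n}$ for $j\leq N+1$ and $q_j\leq d_m$ for all $j$, we obtain
$$
mnd_m=\sum_{j=1}^{N+1}\Big(d_m-\binom{n+j-1}{n}\Big)\leq\sum_{j=1}^{N+1}(d_m-q_j)\leq\sum_{j\geq1}(d_m-q_j)\leq mnd_m,
$$
so all inequalities are equalities. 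Equality in the first one forces $q_j=\binom{n+j-1}{n}$ for every $1\leq j\leq N+1$; in particular $q_{N+1}=\binom{N+n}{n}=d_m$.

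Therefore $\mathrm{ev}_p^{N}\colon H^0(X,mL)\to J^N_p(mL)$ is a surjection between vector spaces of dimension $d_m$, hence an isomorphism, so $|mL|$ separates $N$-jets at $p$. For the explicit form, fix any holomorphic coordinates $(z_1,\dots,z_n)$ centered at $p$ and any local trivialization of $mL$ near $p$; these identify $J^N_p(mL)$ with the space of polynomials of degree $\leq N$, having the monomials $z^\alpha$ ($\alpha_i\in\ZZ_{\geq0}$, $\sum_i\alpha_i\leq N$) as a basis. Put $s_\alpha:=(\mathrm{ev}_p^{N})^{-1}(z^\alpha)$. Then $\{s_\alpha\}$ is a basis of $H^0(X,mL)$ and the local representative of $s_\alpha$ has $N$-jet $z^\alpha$, i.e.\ $s_\alpha(z)=z_1^{\alpha_1}\cdots z_n^{\alpha_n}+o(|z|^{N})$.

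I expect the only real subtleties to be (i) the standard fact that a filtration-adapted basis realizes $\sum_i\ord_p(s_i)=\sum_{j\geq1}\dim F^j$, which is what feeds the bound $\ord_p(D)\leq n$ back into the ranks $q_j$, and (ii) the passage from equality of the two outer sums to the termwise equalities $q_j=\binom{n+j-1}{n}$, which uses that $d_m-q_j\geq d_m-\binom{n+j-1}{n}\geq 0$ for each $j\leq N+1$ and $d_m-q_j\geq0$ for $j>N+1$; both are routine. There is no genuinely hard step once one observes that the equality hypothesis collapses the estimate of Proposition \ref{prop:delta-m-vol-comp} into a chain of equalities.
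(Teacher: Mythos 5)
Your argument is correct and follows essentially the same route as the paper: in the equality case all the jet-estimates \eqref{eq:h0-jP-estimate} are forced to be equalities for $1\leq j\leq m(n+1)+1$, whence the $m(n+1)$-jet evaluation map at $p$ is an isomorphism between spaces of the same dimension $\binom{m(n+1)+n}{n}$, and one pulls back the monomial basis. The only difference is presentational — you spell out the adapted-basis/linear-algebra step that the paper delegates to the citations in Proposition \ref{prop:delta-m-vol-comp}, and you conclude via surjectivity of the jet map rather than injectivity (vanishing of $h^0(X,\frak m_p^{m(n+1)+1}\otimes mL)$); both are equivalent here.
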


\begin{proof}
    From the proof of Proposition \ref{prop:delta-m-vol-comp}, the equality case will force \eqref{eq:h0-jP-estimate} to be an equality for all $1\leq j\leq  m(n+1)+1$. Namely,
    $$
     h^0(X,mL)-h^0(X,\frak m_{p}^j\otimes mL)=\binom{n+j-1}{n},\ 1\leq j \leq  m(n+1)+1.
    $$
    So in particular,
    $$
    h^0(X,\frak {m}_{p}^{m(n+1)+1} \otimes mL)=0,
    $$
    meaning that the vanishing order at $p$ of any nonzero section of $mL$ is less than $m(n+1)+1$.
   Since $h^0(X,mL)=\binom{m(n+1)+n}{n}$ is exactly equal to the dimension of the space of $n$-variable polynomials with degree less than $m(n+1)+1$, we conclude.
\end{proof}

As a consequence, we obtain the following quantized volume comparison theorem, which implies Theorem \ref{thm:delta-m-vol-comp}.

\begin{theorem}
\label{thm:quant-vol-comp}
    Let $X$ be a Fano manifold. Assume that for some $m\geq 1$ and $p\in X$ we have $\delta_{m,p}(-K_X)\geq 1$. Then
    $$
    h^0(X,-mK_X)\leq h^0(\PP^n,-mK_{\PP^n}),
    $$
    and the equality holds if and only if $X\cong \PP^n$.
\end{theorem}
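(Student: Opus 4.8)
The plan is to combine the two preceding propositions with Fujita's original volume inequality, using the former to reduce the equality analysis to a jet-separation statement and the latter (plus Kollár–Demailly–Siu type arguments, or simply a limiting argument) to identify the variety. First, the inequality $h^0(X,-mK_X)\leq h^0(\PP^n,-mK_{\PP^n})$ is exactly Proposition \ref{prop:delta-m-vol-comp} applied with $L=-K_X$, since $\delta_{m,p}(-K_X)\geq 1$ is assumed; so there is nothing to do for the bound itself. The real content is the equality case. If $\PP^n$, the bound is an equality for every $m$ because $h^0(\PP^n,-mK_{\PP^n})=\binom{m(n+1)+n}{n}$ in either reading, so one direction is trivial. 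For the converse, assume the equality $h^0(X,-mK_X)=\binom{m(n+1)+n}{n}$ holds for our fixed $m$; I want to conclude $X\cong\PP^n$.

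The key step is to feed the equality into Proposition \ref{prop:delta-m-vol-eq-case}, which tells us that $|-mK_X|$ separates $m(n+1)$-jets at $p$: there is a coordinate system $(z_1,\dots,z_n)$ near $p$ and a basis $\{s_\alpha\}$ of $H^0(X,-mK_X)$ whose local expansions are the monomials $z^\alpha$ with $|\alpha|\leq m(n+1)$, modulo $o(|z|^{m(n+1)})$. From this I would extract the volume equality $\vol(-K_X)=(n+1)^n$. The cleanest route: jet separation of order $m(n+1)$ at a single point gives a lower bound on the local positivity (Seshadri-type constant) of $-K_X$ at $p$, hence on the volume; more concretely, since $h^0(X,\mathfrak m_p^{j}\otimes(-mK_X))=0$ for $j=m(n+1)+1$ while $h^0(X,-mK_X)$ attains the maximal possible value compatible with vanishing orders $\leq m(n+1)$, the section ring of $-K_X$ localized at $p$ behaves to this order exactly like that of $(\PP^n,\cO(n+1))$ at a point. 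Passing $\ell\to\infty$ with the multiples $\ell m$ and using that $\delta_{m,p}\geq 1$ implies (by the argument of Proposition \ref{prop:delta-m-vol-comp} applied to $\ell m$, or simply by the above jet structure being inherited) the analogous maximal jet-separation persists, one gets $\vol(-K_X)=\lim \frac{h^0(X,-\ell m K_X)}{(\ell m)^n/n!}=(n+1)^n$. Then Fujita's equality case in \cite{Fuj18-volume} — valid since $\delta(-K_X)\geq\delta_m(-K_X)\cdot(\text{const})$ need not hold, so instead I invoke that $\delta_{m,p}\geq1$ is not quite K-semistability; hence I should argue the volume equality forces $X\cong\PP^n$ directly via the characterization of projective space by a jet-separating anticanonical system, e.g. via \cite{KMM92}-style boundedness plus the rigidity that $(-K_X)^n=(n+1)^n$ together with $-K_X$ separating many jets at a point pins down $X$.

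The main obstacle I anticipate is precisely this last identification step: $\delta_{m,p}(-K_X)\geq 1$ at a single point $p$ and a single $m$ is genuinely weaker than K-semistability, so I cannot simply quote the equality case of \cite{Fuj18-volume} as a black box. The honest fix is to observe that Proposition \ref{prop:delta-m-vol-eq-case} already delivers, at $p$, an anticanonical linear subsystem separating $m(n+1)$-jets, which forces $-K_X$ to be $m(n+1)$-jet ample at $p$; combined with $\vol(-K_X)=(n+1)^n$ this should force, by a degree count on the rational map to $\PP^{\binom{m(n+1)+n}{n}-1}$ it defines (or rather on a suitable sub-map to $\PP^n$ given by the degree-one part $z_1,\dots,z_n$), that $X$ maps birationally onto $\PP^n$ with no exceptional locus, hence isomorphically. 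Making the count that the exceptional locus must be empty — ruling out that the jet data at one point is compatible with a nontrivial contraction elsewhere — is where care is needed, and I would handle it by noting that any proper birational $X\to\PP^n$ strictly increases, or leaves unchanged only in the isomorphism case, the relevant dimension counts that we have already forced to be maximal.
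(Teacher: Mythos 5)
Your handling of the inequality direction is correct and matches the paper (quote Proposition \ref{prop:delta-m-vol-comp}). Two issues remain in the equality analysis, one small and one serious.

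First, for the implication ``$X\cong\PP^n$ implies equality'' you declare it trivial, but the theorem is an \emph{iff} under the standing hypothesis $\delta_{m,p}(-K_X)\geq 1$: to say $\PP^n$ attains equality you must check that $\PP^n$ actually satisfies this hypothesis. The paper does this by homogeneity of $\PP^n$ together with the computation $\delta_m(-K_{\PP^n})=1$ from \cite{RTZ20} (or \cite[Corollary A.8]{JR25}). This is a short but non-vacuous step.

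Second, and more importantly, your argument for ``equality implies $X\cong\PP^n$'' has a real gap exactly where you suspect one. You correctly extract from Proposition \ref{prop:delta-m-vol-eq-case} that $|-mK_X|$ separates $m(n+1)$-jets at $p$. But your route from there is flawed on two counts. (i) You try to get $\vol(-K_X)=(n+1)^n$ by applying the jet structure or Proposition \ref{prop:delta-m-vol-comp} to multiples $\ell m$; this does not work, since $\delta_{m,p}\geq 1$ gives no control over $\delta_{\ell m,p}$ (the quantized delta invariants are not monotone in $m$), and jet separation at level $m$ does not automatically propagate to the expected order at level $\ell m$. (ii) Your ``degree count on the rational map to $\PP^n$'' argument is left as a sketch and does not pin down the isomorphism. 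The paper's actual proof sidesteps both problems cleanly: separation of $m(n+1)$-jets by $|-mK_X|$ gives, by \cite[Theorem~1]{BS09}, a Seshadri constant bound $\epsilon(-K_X,p)\geq n+1$, and then \cite[Theorem~2]{BS09} characterizes $\PP^n$ among Fano manifolds by precisely this Seshadri bound (no separate volume computation is needed; it is subsumed). So the missing ingredient is the Bauer--Szemberg characterization of $\PP^n$ via Seshadri constants; without it (or the Liu--Zhuang generalization \cite{LZ18}), the final identification step is not established.
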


\begin{proof}

In view of Proposition \ref{prop:delta-m-vol-comp}, it suffices to deal with the equality case.

If $X\cong\PP^n$, then $\delta_{m}(-K_X)=\delta_{m,p}(-K_X)$ for any $p\in \PP^n$, as $\PP^n$ is homogeneous. Moreover, by \cite[Corollary 7.2]{RTZ20} (see also \cite[Corollary A.8]{JR25}), we have $\delta_{m}(-K_{\PP^n})=1$. So $X\cong \PP^n$ indeed attains the equality case.

Now suppose that $
    h^0(X,-mK_X)=h^0(\PP^n,-mK_{\PP^n}).
    $   
By
Proposition \ref{prop:delta-m-vol-eq-case}, the linear system $|-mK_X|$ generates $m(n+1)$-jets at $p$. By \cite[Theorem 1]{BS09}, the Seshadri constant of $-K_X$ at $p$ is no less than $n+1$, so $X\cong \PP^n$ by \cite[Theorem 2]{BS09}.
\end{proof}

Replacing \cite{BS09} with the more general characterization of $\PP^n$ due to Liu--Zhuang \cite{LZ18}, one can actually prove the following.

\begin{theorem}
    Let $X$ be a $\QQ$-Fano variety with klt singularities. Let $m\geq 1$ be such that $-mK_X$ is Cartier. Assume that for some smooth point $p\in X$ we have $\delta_{m,p}(-K_X)\geq 1$. Then
    $$
    h^0(X,-mK_X)\leq h^0(\PP^n,-mK_{\PP^n}),
    $$
    and the equality holds if and only if $X\cong \PP^n$.
\end{theorem}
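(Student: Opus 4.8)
The plan is to run the proof of Theorem~\ref{thm:quant-vol-comp} essentially verbatim, isolating the two places where smoothness of $X$ (rather than of the single point $p$) was used: Propositions~\ref{prop:delta-m-vol-comp}--\ref{prop:delta-m-vol-eq-case}, and the appeal to \cite{BS09}. The first thing I would do is observe that Propositions~\ref{prop:delta-m-vol-comp} and \ref{prop:delta-m-vol-eq-case} hold word for word with $X$ a normal projective variety, $L=-K_X$ (so that $mL=-mK_X$ is Cartier by hypothesis), and $p\in X$ a \emph{smooth} point. Every step of their proofs is local at $p$: the jet bound \eqref{eq:h0-jP-estimate} is just Taylor expansion of sections in a chart around the smooth point $p$; producing an $m$-basis divisor $D$ with $\operatorname{ord}_p D>n$ out of the numerical inequality is linear algebra on $H^0(X,mL)$, exactly as in \cite[Lemma~2.2]{FO18} and \cite[Lemma~2.7]{CRZ19}; and ``$\operatorname{ord}_pD>n$ at a smooth point $\Rightarrow (X,D)$ is not log canonical at $p$'' is the purely local assertion \cite[Lemma~8.10]{Kol97}, which only sees the smooth germ $(X,p)$. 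Consequently $\delta_{m,p}(-K_X)\ge1$ already yields the inequality $h^0(X,-mK_X)\le h^0(\PP^n,-mK_{\PP^n})$ --- klt-ness of $X$ is not needed for this --- and, in the equality case, $h^0(X,\frak m_p^{m(n+1)+1}\otimes(-mK_X))=0$, i.e.\ the linear system $|-mK_X|$ separates $m(n+1)$-jets at $p$.

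For the equality statement I would then run the Seshadri-constant argument. Since $p$ is a smooth point, the blow-up of $X$ at $p$ behaves as in the smooth case, so \cite[Theorem~1]{BS09} applies at $p$ and separation of $m(n+1)$-jets forces $\epsilon(-K_X;p)\ge n+1$. The only genuinely new input is to replace \cite[Theorem~2]{BS09}, valid for smooth Fano manifolds, by the Liu--Zhuang characterization of projective space \cite{LZ18}: a klt $\QQ$-Fano variety of dimension $n$ whose anticanonical Seshadri constant at some smooth point is $\ge n+1$ must be isomorphic to $\PP^n$. This is the unique place the klt hypothesis enters, and it gives $X\cong\PP^n$. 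The reverse implication is already covered by Theorem~\ref{thm:quant-vol-comp}: $\PP^n$ is smooth, homogeneous, and satisfies $\delta_m(-K_{\PP^n})=1$ by \cite[Corollary~7.2]{RTZ20} (see also \cite[Corollary~A.8]{JR25}), so $X\cong\PP^n$ does realize both $\delta_{m,p}(-K_X)\ge1$ and the equality $h^0(X,-mK_X)=h^0(\PP^n,-mK_{\PP^n})$.

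The main obstacle is bookkeeping rather than calculation: one must be genuinely careful that nothing in the proofs of Propositions~\ref{prop:delta-m-vol-comp}--\ref{prop:delta-m-vol-eq-case} covertly uses smoothness of $X$ away from $p$ --- in particular, that normality of $X$ is enough for $m$-basis divisors and for $\lct_p(X,D)$ to behave as in \S\ref{sec:delta}, and that \cite[Lemma~8.10]{Kol97} is invoked only for the smooth germ at $p$. A secondary point to pin down is the exact form of the hypotheses in \cite[Theorem~1]{BS09} and \cite{LZ18}: that both apply to, or immediately extend to, a smooth point on a possibly singular ambient variety, and that the non-strict bound ``$\ge n+1$'' is precisely what they require. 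Once these are confirmed the argument is complete.
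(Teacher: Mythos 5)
Your proposal is correct and coincides with the paper's intended argument: the paper explicitly omits the proof because it is the same as that of Theorem~\ref{thm:quant-vol-comp} with \cite{BS09} replaced by the Liu--Zhuang characterization of $\PP^n$ \cite{LZ18}, which is exactly the substitution you make. Your additional bookkeeping --- checking that Propositions~\ref{prop:delta-m-vol-comp} and \ref{prop:delta-m-vol-eq-case} only use smoothness at the point $p$ and that klt-ness enters solely through \cite{LZ18} --- is a faithful elaboration of the same route rather than a different one.
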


We omit the proof since it is the same as the one for Theorem \ref{thm:quant-vol-comp}.

\bibliography{ref.bib}
\bibliographystyle{alpha}

\end{document}